\newtheorem{thm}{Theorem}[section]
\newtheorem{cor}{Corollary}[section]
\newcommand{\essup}{\rm essup}
\numberwithin{equation}{section}
\def\k0{\kappa_0}
\def\bfu{{\bf{u}}}
\def\bfx{{\bf{x}}}
\def\bff{{\bf{f}}}
\def\bfg{{\bf{g}}}
\def\bfv{{\bf{v}}}
\def\bfw{{\bf{w}}}
\begin{document}
\title[A strengthening of the energy inequality]
{A strengthening of the energy inequality for the Leray-Hopf solutions
of the 3D periodic Navier-Stokes equations}
\author{R. Dascaliuc}
\address{Department of Mathematics\\
University of Virginia\\ Charlottesville, VA 22904}
%\author{C. Foias}
%\address{Department of Mathematics\\
%Texas A\&M University\\ College Station, TX 77843}
\date{\today}
\begin{abstract}
In present note we establish the following inequality for the the
Leray-Hopf solutions of the 3-D $\Omega$-periodic Navier-Stokes Equations:
\[\phi(|u(t)|^2)-\phi(|u(t_0)|^2)\le
2\int_{t_0}^{t}\phi'(|u(\tau)|^2)
\left[-\nu|A^{1/2}u(\tau)|^2+(g(\tau),u(\tau))\right]\,d\tau\]
for all $t_0$ Leray-Hopf points, $t\ge t_0$, and
$\phi:\mathbb{R}_{+}\to\mathbb{R}$ is an absolutely continouos non-decreasing
function with bounded derivative.
%with $\phi'(\xi)\ge0$ for all $\xi>0$.
Here $(\cdot,\cdot)$ and $|\cdot|$
is correspondingly the $L^2$ inner product and the $L^2$ norm on $\Omega$,
and $A$ is the Stokes operator.
\end{abstract}
\maketitle

%\sections{introduction}

\section{preliminaries}
We consider three dimensional incompressible Navier-Stokes equations (NSE)
\begin{equation}\label{inc-nse}
\begin{aligned}
\frac{\partial}{\partial t}\bfu(t,\bfx)-\nu\Delta \bfu(t,\bfx)
+(\bfu(t,\bfx)\cdot\nabla)\bfu(t,\bfx)+\nabla p(t,\bfx)&=\bff(t,\bfx)\\
\nabla\cdot\bfu(t,\bfx)&=0,
\end{aligned}
\end{equation}
where $\bfx\in\mathbb{R}^3$, $t\in\mathbb{R}$; and
$\bfu(t,\bfx),\bff(t,\bfx)\in\mathbb{R}^3$ and
$p(t,\bfx)\in\mathbb{R}$ for all $t$ and $\bfx$.
We supplement \eqref{inc-nse} with periodic boundary conditions
\begin{equation}\label{bc-nse}
\begin{aligned}
\bfu,p,\bff\ &\mbox{are}\ \Omega-\mbox{periodic},\\
\int_{\Omega}\bfu(t,\bfx)\,d\bfx=0\ &\mbox{and}\
\int_{\Omega}\bff(t,\bfx)\,d\bfx=0\ \mbox{for all}\ t,
\end{aligned}
\end{equation}
where
\[\Omega=[0,L]^3,\]
and the initial condition
\begin{equation}\label{ic-nse}
\begin{aligned}
\bfu(0,\bfx)=\bfu_0(\bfx)\ \mbox{for all}\ \bfx,\\
\bfu_0\ \mbox{satisfies \eqref{bc-nse}}.
\end{aligned}
\end{equation}

Taking the Leray projector $P_L$ of \eqref{inc-nse} we obtain
\begin{equation}\label{nse}
\frac{d}{dt}\bfu+\nu A\bfu+B(\bfu,\bfu)=\bfg,
\end{equation}
where
\begin{align*}
A\bfu=-P_L\Delta\bfu,\ B(\bfu,\bfu)=P_L(\bfu\cdot\nabla)\bfu,\
\mbox{and}\ \bfg=P_L\bff.
\end{align*}
We introduce the following functional space
\begin{equation}\label{H-def}
H=\left\{\bfu: \bfu\ -\ \Omega-\mbox{periodic},\ \bfu\in L^2(\Omega)^3,\
\nabla\cdot{\bfu}=0, \int_{\Omega}\bfu=0\right\}
\end{equation}
For any $\bfu,\bfv\in H$ denote
\[(\bfu,\bfv)=\int_{\Omega}\bfu\cdot\bfv\]
and
\[|\bfu|^2=\int_{\Omega}\bfu\cdot\bfu,\]
the $L^2$ inner product and norm on $H$.
Note that $A:D(A)\in H\to H$ is a positive self adjoint operator with
a compact inverse; its domain $D(A)$ is dense in $H$. Denote by
$\lambda_k$, $k\in\mathbb{N}$,
its eigenvalues arranged in the increasing order and
counting the multiplicities.
We also introduce the functional space
\begin{equation}\label{V-def}
V=\left\{\bfu\in H:\ \bfu\in H^1(\Omega)^3\right\}=
\left\{\bfu\in H:\ \bfu\in D(A^{1/2})\right\}.
\end{equation}
Define the inner product and the norm on $V$ by
\[((\bfu,\bfv))=(A^{1/2}\bfu,A^{1/2}\bfv)\]
and
\[||\bfu||^2=|A^{1/2}\bfu|^2\]
for all $\bfu,\bfv\in V$.
Note that
\[\lambda_1|\bfu|^2\le||\bfu||^2\]
for all $\bfu\in V$.

The bilinear operator $B$ has the following orthogonality property:
\[(B(\bfu,\bfv),\bfw)=-(B(\bfu,\bfw),\bfv),\ \mbox{for all}\
\bfu,\bfv,\bfw\in V,\]
and consequently
\[(B(\bfu,\bfv),\bfv)=0,\ \mbox{for all}\ \bfu,\bfv\in V.\]

In this paper we study weak solutions of the NSE, i.e. $H$-valued
functions $\bfu(t)$ that satisfy
\[
\frac{d}{dt}(\bfu(t),\bfv)+\nu((\bfu(t),\bfv))+(B(\bfu(t),\bfu(t)),\bfv)=
(\bfg(t),\bfv)\ \mbox{for all}\ \bfv\in V.
\]

For more background on the Leray-Hopf solutions of Navier-Stokes equations and
on the functional setting used here consult 
\cite{MR972259,MR1855030,MR1325465,MR1846644,MR1318914,xx,yy}

The principal result about the existence of the weak (Leray-Hopf) solutions
of the NSE can be stated as follows.
\begin{thm}\label{LH-soln}
Let $\bfu_0\in H$ and $\bfg\in L^2([0,T],H)$, where $T>0$ given.
Then there exist a weak solution $\bfu\in L^2([0,T],V)$, which is weakly
continuous as a function from $[0,T]$ to $H$.

Moreover, the following inequality holds
\begin{equation}\label{LH-ineq}
|\bfu(t)|^2-|\bfu(t_0)|^2\le
2\int_{t_0}^{t}\left[-\nu||\bfu(\tau)||^2+(\bfg(\tau),\bfu(\tau))\right]\,d\tau
\end{equation}
for all $t_0\ge 0$ Lebesgue points of $|\bfu(\tau)|^2$ and all $t\in[t_0,T]$.
\end{thm}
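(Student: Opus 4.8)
The plan is to construct the weak solution via the Galerkin approximation scheme and to obtain the energy inequality \eqref{LH-ineq} as a limit of the exact energy equalities satisfied by the finite-dimensional approximants.

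First I would set up the Galerkin system. Let $\{w_k\}_{k\in\mathbb{N}}$ be the orthonormal basis of $H$ consisting of eigenfunctions of the Stokes operator $A$, and let $P_m$ denote the orthogonal projection onto the span $H_m=\mathrm{span}\{w_1,\dots,w_m\}$. For each $m$ I would seek $\bfu_m(t)=\sum_{k=1}^{m}c_{k}^{m}(t)w_k$ solving the projected ODE system $\frac{d}{dt}\bfu_m+\nu A\bfu_m+P_m B(\bfu_m,\bfu_m)=P_m\bfg$ with $\bfu_m(0)=P_m\bfu_0$. Local existence is standard by Picard iteration since the nonlinearity is quadratic and hence locally Lipschitz on $H_m$. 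Testing this equation against $\bfu_m(t)$ and using the orthogonality $(B(\bfu_m,\bfu_m),\bfu_m)=0$ yields the exact energy identity
\begin{equation}\label{galerkin-energy}
\frac{1}{2}\frac{d}{dt}|\bfu_m(t)|^2+\nu\|\bfu_m(t)\|^2=(\bfg(t),\bfu_m(t)).
\end{equation}
Integrating \eqref{galerkin-energy} and applying Cauchy-Schwarz together with Young's inequality to bound $(\bfg,\bfu_m)\le\frac{\nu}{2}\|\bfu_m\|^2+C|\bfg|^2$ (using $|\bfu_m|^2\le\lambda_1^{-1}\|\bfu_m\|^2$) gives a uniform-in-$m$ bound for $\bfu_m$ in $L^\infty([0,T],H)\cap L^2([0,T],V)$; this a priori estimate also rules out finite-time blow-up, so each $\bfu_m$ exists on all of $[0,T]$.

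Next I would extract the solution by compactness. The uniform bounds furnish a subsequence (not relabeled) with $\bfu_m\rightharpoonup\bfu$ weakly in $L^2([0,T],V)$ and weakly-$*$ in $L^\infty([0,T],H)$. To pass to the limit in the nonlinear term one needs strong convergence, which I would obtain from an Aubin-Lions type argument: using the equation to bound $\frac{d}{dt}\bfu_m$ in $L^{4/3}([0,T],V')$, the embedding $V\hookrightarrow H\hookrightarrow V'$ with the first embedding compact yields $\bfu_m\to\bfu$ strongly in $L^2([0,T],H)$. This suffices to show that $\bfu$ is a weak solution in the sense defined in the excerpt and, via the weak continuity of the approximants, that $\bfu$ is weakly continuous from $[0,T]$ into $H$.

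Finally I would derive \eqref{LH-ineq}. The crucial point is that the energy identity \eqref{galerkin-energy} survives only as an inequality in the limit because $|\cdot|^2$ and $\|\cdot\|^2$ are only weakly lower semicontinuous. Integrating \eqref{galerkin-energy} from $t_0$ to $t$, I would pass to the limit: the viscosity term obeys $\int_{t_0}^{t}\|\bfu\|^2\le\liminf_m\int_{t_0}^{t}\|\bfu_m\|^2$ by weak lower semicontinuity in $L^2([0,T],V)$, the forcing term converges by the strong $L^2([0,T],H)$ convergence, and on the left $|\bfu(t)|^2\le\liminf_m|\bfu_m(t)|^2$ while $|\bfu(t_0)|^2=\lim_m|\bfu_m(t_0)|^2$ at Lebesgue points. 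The main obstacle is precisely handling the endpoint terms: the weak continuity of $\bfu$ only controls $|\bfu(t)|^2$ from one side at a general $t$, whereas at a Lebesgue point $t_0$ of $|\bfu(\tau)|^2$ one can upgrade to genuine convergence $|\bfu_m(t_0)|\to|\bfu(t_0)|$ by averaging \eqref{galerkin-energy} over small intervals and using strong $L^2$ convergence, which is why the inequality is asserted for Lebesgue points $t_0$ but all $t\ge t_0$.
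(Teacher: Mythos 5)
You should first be aware that the paper does not prove this statement at all: Theorem \ref{LH-soln} is quoted as the classical Leray--Hopf existence theorem, with the proof deferred to the cited references. Your proposal therefore has to stand on its own, and its skeleton is indeed the standard one from those references: Galerkin approximation, the exact energy identity for the approximants, uniform bounds in $L^\infty([0,T],H)\cap L^2([0,T],V)$, a bound on $\frac{d}{dt}\bfu_m$ in $L^{4/3}([0,T],V')$, Aubin--Lions compactness, and weak lower semicontinuity to turn the approximate identities into the limiting inequality. Up to the derivation of \eqref{LH-ineq} for almost every $t_0$ (and $t_0=0$) and all $t\ge t_0$, the argument is correct, modulo the routine verification that $\bfu_m(t)\rightharpoonup\bfu(t)$ in $H$ for \emph{every} fixed $t$ (needed when you write $|\bfu(t)|^2\le\liminf_m|\bfu_m(t)|^2$ at an arbitrary $t$), which follows from equicontinuity of $t\mapsto(\bfu_m(t),\bfv)$ and Arzel\`a--Ascoli.

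The genuine gap is in your last step, and it sits exactly at the refinement the theorem asserts: validity at every Lebesgue point $t_0$ of $|\bfu(\tau)|^2$. You claim that at such a $t_0$ one can prove $|\bfu_m(t_0)|\to|\bfu(t_0)|$ by averaging the Galerkin energy identity over small intervals and invoking strong $L^2([0,T],H)$ convergence. That cannot work: being a Lebesgue point is a property of the limit function alone and places no constraint on the approximants at that one instant; strong $L^2$ convergence yields pointwise strong convergence only almost everywhere along a subsequence, and the exceptional null set cannot be matched with the set of non-Lebesgue points. Quantitatively, averaging the Galerkin identity over $s\in[t_0,t_0+h]$ leaves an error bounded by $2\int_{t_0}^{t_0+h}\left[\nu||\bfu_m(\tau)||^2+|\bfg(\tau)|\,|\bfu_m(\tau)|\right]d\tau$, and since $||\bfu_m(\cdot)||^2$ is only bounded in $L^1([0,T])$ uniformly in $m$, this mass may concentrate at $t_0$, so the error need not vanish as $h\to0$ uniformly in $m$; one gets semicontinuity bounds, never the two-sided convergence you assert. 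What the limit passage actually requires at $t_0$ is $\liminf_m|\bfu_m(t_0)|^2\le|\bfu(t_0)|^2$, and this is not extractable at a prescribed time. The standard repair operates on the limit function only: having \eqref{LH-ineq} for a.e.\ initial time $s$ and all $t\ge s$, fix a Lebesgue point $t_0$ and $t>t_0$, average that inequality over admissible $s\in[t_0,t_0+h]$, and let $h\to0$; the term $\frac1h\int_{t_0}^{t_0+h}|\bfu(s)|^2\,ds$ tends to $|\bfu(t_0)|^2$ by the Lebesgue point property, while the right-hand side converges by absolute continuity of the integral of the fixed $L^1$ function $-\nu||\bfu(\tau)||^2+(\bfg(\tau),\bfu(\tau))$. (Equivalently: $|\bfu(t)|^2+2\int_0^t\left[\nu||\bfu(\tau)||^2-(\bfg(\tau),\bfu(\tau))\right]d\tau$ agrees a.e.\ with a non-increasing function, and one compares values via monotonicity, weak continuity of $\bfu$, and the Lebesgue point property.) With that substitution your proof is complete and coincides with the proof in the literature the paper cites.
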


In fact, the set of Lebesgue points of $|\bfu(\tau)|^2$ is dense in $[0,T]$ and
includes $t_0=0$ and any $t_0$ such that $\bfu(t_0)\in V$.

Under the additional assumptions on the initial data, one gets local
existence of the regular solutions of the NSE.

\begin{thm}\label{reg-soln}
Suppose $\bfu_0\in V$ and $\bfg\in L^2([0,T],H)$. Then there exists
$T_{*}=T_{*}(||u_0||,\nu,\Omega)>0$ such that the weak solution $\bfu$
from Theorem
\ref{LH-soln} is unique on $[0 ,T_{*}]$ and
$\bfu\in L^2([0,T_{*}], D(A))\cap C([0,T_{*}],V)$
(i.e. $\bfu$ is {\em regular} on $[0,T_{*}]$). Moreover, the inequality
\eqref{LH-ineq} becomes equality for all $t_0,t\in[0,T_{*}]$, $t\ge t_0$.
\end{thm}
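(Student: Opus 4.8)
The plan is to prove Theorem \ref{reg-soln} by the standard Galerkin approximation combined with a \emph{higher-order} energy estimate, i.e.\ an a priori bound on $||\bfu||$ rather than only on $|\bfu|$. First I would set up the Galerkin scheme using the eigenfunctions of $A$, obtaining finite-dimensional approximations $\bfu_m$ that solve the projected system; the bounds derived below hold for $\bfu_m$ uniformly in $m$ and survive the limit.

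The key step is the $V$-level energy estimate. Formally pairing \eqref{nse} with $A\bfu$ yields
\[
\frac{1}{2}\frac{d}{dt}||\bfu||^2+\nu|A\bfu|^2+(B(\bfu,\bfu),A\bfu)=(\bfg,A\bfu).
\]
In three dimensions the trilinear term obeys the estimate $|(B(\bfu,\bfu),A\bfu)|\le c\,||\bfu||^{3/2}|A\bfu|^{3/2}$, proved by H\"older together with the interpolation/Sobolev inequalities controlling $\|\bfu\|_{L^6}$ and $\|\nabla\bfu\|_{L^3}$ by $||\bfu||$ and $|A\bfu|$. After Young's inequality the dissipation absorbs the $|A\bfu|^2$ contribution, leaving a Riccati-type differential inequality
\[
\frac{d}{dt}||\bfu||^2\le\frac{c}{\nu^3}||\bfu||^6+\frac{1}{\nu}|\bfg|^2 .
\]
Integrating this shows that $||\bfu(t)||$ stays finite on an interval $[0,T_*]$ whose length $T_*$ is bounded below in terms of $||\bfu_0||$, $\nu$, $\Omega$ (and the norm of $\bfg$); on that interval one simultaneously obtains $\bfu\in L^\infty([0,T_*],V)\cap L^2([0,T_*],D(A))$.

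With these uniform bounds I would pass to the limit in the Galerkin system to produce a solution with $\bfu\in L^2([0,T_*],D(A))$; since then $\tfrac{d}{dt}\bfu\in L^2([0,T_*],H)$, a standard interpolation lemma (of Lions--Magenes type) gives $\bfu\in C([0,T_*],V)$. Uniqueness follows by forming the difference $\bfw=\bfu-\bfv$ of two regular solutions with identical data, pairing the equation for $\bfw$ with $\bfw$, estimating $(B(\bfw,\bfv),\bfw)$ via the bilinear inequalities, and invoking Gronwall; combined with Theorem \ref{LH-soln} this identifies the regular solution with the Leray--Hopf one.

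Finally, for the energy equality: because $\bfu\in L^2([0,T_*],D(A))$ and $\tfrac{d}{dt}\bfu\in L^2([0,T_*],H)$, the pairing $(\tfrac{d}{dt}\bfu,\bfu)$ is well defined and equals $\tfrac12\tfrac{d}{dt}|\bfu|^2$ in the distributional sense, so integrating the identity obtained by pairing \eqref{nse} with $\bfu$ (the nonlinear term vanishing by orthogonality) turns \eqref{LH-ineq} into an equality on $[0,T_*]$. The main obstacle is the $V$-estimate itself: the three-dimensional trilinear bound forces the Riccati inequality, which is exactly why regularity and uniqueness can be guaranteed only locally in time.
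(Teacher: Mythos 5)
The paper itself offers no proof of Theorem \ref{reg-soln}: it is stated as classical background and deferred to the cited references, where the argument is exactly the one you outline. Your sketch --- Galerkin approximation, the $V$-level estimate from pairing with $A\bfu$, the trilinear bound $|(B(\bfu,\bfu),A\bfu)|\le c\,||\bfu||^{3/2}|A\bfu|^{3/2}$, Young's inequality yielding the Riccati-type inequality, local-in-time integration, passage to the limit, and the Lions--Magenes lemma giving $\bfu\in C([0,T_*],V)$ and the energy equality --- is the standard proof of this result and is essentially sound. (You are also right that $T_*$ should in principle depend on the forcing as well as on $||\bfu_0||$, $\nu$, $\Omega$; the paper's statement is slightly loose on this point.)

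One step falls short of the claim as stated. Your uniqueness argument --- forming the difference of two \emph{regular} solutions, pairing with $\bfw$, and applying Gronwall --- proves uniqueness within the class of regular solutions. But the theorem asserts that \emph{the weak solution from Theorem \ref{LH-soln}} is unique on $[0,T_*]$, i.e., every Leray--Hopf weak solution with the same data coincides with the regular one there. For that you need the weak--strong uniqueness argument (Serrin, Sather, Lions--Prodi): the difference $\bfw=\bfu-\bfv$ of a weak and a strong solution does not have enough regularity to justify pairing its equation with $\bfw$ and invoking the chain rule. Instead one combines the energy \emph{inequality} \eqref{LH-ineq} satisfied by the weak solution, the energy \emph{equality} satisfied by the strong one, and the weak formulation tested against the strong solution to control the cross term $(\bfu(t),\bfv(t))$, and only then applies Gronwall. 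This is a standard but genuinely different computation from strong--strong uniqueness, and the phrase ``combined with Theorem \ref{LH-soln}'' does not supply it; since the whole point of the main theorem of the paper is to manipulate the energy inequality at this level of generality, the distinction is worth making precise.
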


It is important to mention that any weak solution of the NSE
is regular on the set
\[\mathcal{G}=\cup_n I_n,\]
where $\{I_n\}_n$ is a countable family of disjoint intervals
with cl$(\mathcal{G})=[0,T]$, and the fractal dimension of
$[0,T]\backslash\mathcal{G}$
less or equal to $1/2$.

\section{Proof of the main result}
\begin{thm}\label{thm-main}
Let $\bfg\in L^{2}([0,T],H)$ and
suppose $\bfu$ is a weak solution for the NSE.
Let $\phi:\mathbb{R}_{+}\to\mathbb{R}$ be
an absolutely continouos non-decreasing function with
$\phi'\in L^{\infty}(\mathbb{R}_{+},\mathbb{R}_{+})$.
%Let $\phi\in C^1(\mathbb{R}_{+},
%\mathbb{R})$ be such that $\phi'(\xi)\ge 0$ for all $\xi\ge0$.
%any nondecreasing
%differentiable function with a locally bounded derivative.
Then the following inequality holds
\begin{equation}\label{gen-LHI}
\phi(|\bfu(t)|^2)-\phi(|\bfu(t_0)|^2)\le
2\int_{t_0}^{t}\phi'(|\bfu(\tau)|^2)
\left[-\nu||\bfu(\tau)||^2+(\bfg(\tau),\bfu(\tau))\right]\,d\tau
\end{equation}
for all $t_0\in[0,T]$ - Lebesgue point for $|\bfu(\tau)|^2$ and all
$t\in[0,T]$, $t\ge t_0$.
\end{thm}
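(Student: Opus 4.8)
The plan is to reduce the statement to the ordinary energy inequality \eqref{LH-ineq} together with the structure of the regular set. Write $E(\tau)=|\bfu(\tau)|^2$ and
\[\Phi(t)=2\int_0^t\left[-\nu||\bfu(\tau)||^2+(\bfg(\tau),\bfu(\tau))\right]\,d\tau,\]
which is absolutely continuous on $[0,T]$ since $\bfu\in L^2([0,T],V)$ and $\bfg\in L^2([0,T],H)$. In this notation the target \eqref{gen-LHI} reads $\phi(E(t))-\phi(E(t_0))\le\int_{t_0}^t\phi'(E(\tau))\,d\Phi(\tau)$, while Theorem \ref{LH-soln} says precisely that $F:=E-\Phi$ satisfies $F(t)\le F(t_0)$ for every Lebesgue point $t_0$ of $E$ and every $t\ge t_0$.

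First I would establish that $F$ admits a non-increasing (right-continuous) representative on all of $[0,T]$, so that $E$ is of bounded variation. Since the Lebesgue points of $E$ form a set of full measure, $F$ restricted to them is non-increasing; weak continuity of $\bfu$ makes $E$, and hence $F$, lower semicontinuous, which lets me pass to a genuinely non-increasing representative and control the endpoint values. The crucial point is that the absolutely continuous part of the measure $dF$ vanishes: on the regular set $\mathcal{G}=\cup_n I_n$ Theorem \ref{reg-soln} gives the energy \emph{equality}, so that $E'=\Phi'$ and thus $F'=0$ pointwise there, and $\mathcal{G}$ has full Lebesgue measure because $[0,T]\setminus\mathcal{G}$ has fractal, hence Lebesgue, dimension at most $1/2$. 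Consequently $dF$ is a non-positive singular measure supported on the negligible set $[0,T]\setminus\mathcal{G}$, and $dE=\Phi'\,d\tau+dF$ with $\Phi'\,d\tau$ the absolutely continuous part.

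Next I would compose with $\phi$. Because $\phi$ is Lipschitz ($\phi'\in L^\infty$) and $E$ is of bounded variation, $\phi\circ E$ is of bounded variation and its distributional derivative satisfies the Vol'pert chain rule: the diffuse part of $d(\phi\circ E)$ equals $\phi'(E)$ times the diffuse part of $dE$, while the atomic part records the genuine jumps $\phi(E(\tau^+))-\phi(E(\tau^-))$. Splitting $dE$ as above, the absolutely continuous contribution is exactly $\phi'(E)\,\Phi'\,d\tau$, which integrates to the right-hand side of \eqref{gen-LHI}. The remaining contributions are all non-positive: the Cantor part is $\phi'(E)\,dF_c$ with $\phi'\ge0$ and $dF_c\le0$; and at each jump $\tau$ one has $E(\tau^+)\le E(\tau^-)$, because $\Phi$ is continuous and $dF\le0$, whence $\phi(E(\tau^+))-\phi(E(\tau^-))\le0$ by monotonicity of $\phi$. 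Integrating $d(\phi\circ E)$ over $(t_0,t]$ therefore yields $\phi(E(t))-\phi(E(t_0))\le\int_{t_0}^t\phi'(E)\,\Phi'\,d\tau$, which is \eqref{gen-LHI}.

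The main obstacle is the rigorous handling of the singular part of the energy. On the full-measure regular set $\mathcal{G}$ everything is an elementary chain-rule computation yielding equality; the content of the theorem lies entirely in showing that the non-absolutely-continuous variation of $E$ across $[0,T]\setminus\mathcal{G}$ enters \eqref{gen-LHI} with a favorable sign. This requires (i) upgrading the strong energy inequality at Lebesgue points to honest monotonicity of $F=E-\Phi$, using lower semicontinuity to pin down the endpoint values $E(t_0)$ and $E(t)$, and (ii) correct bookkeeping of the jump and Cantor parts in the composition with $\phi$, where the sign condition $\phi'\ge0$ is exactly what turns each singular contribution into a non-positive term. A reader preferring to avoid the general bounded-variation chain rule can instead check that $H(t):=\phi(E(t))-\int_{t_0}^t\phi'(E)\,\Phi'\,d\tau$ is constant on each regular interval $I_n$ and has only non-positive jumps across their endpoints, hence is non-increasing; this is the same decomposition phrased without measures.
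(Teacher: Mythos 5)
Your main argument is correct, and it takes a genuinely different route from the paper. The paper's proof is elementary and quantitative: it covers $[t_0,t]$ up to total length $\delta$ by finitely many regular intervals, uses the energy \emph{equality} (hence the exact chain rule for $\phi$) inside each of them, and controls the finitely many gap intervals $[t_n,\tau_n]$ by the explicit estimate \eqref{u-est}, i.e.\ the energy inequality plus Poincar\'e/Cauchy--Schwarz gives $|\bfu(\tau_n)|^2-|\bfu(t_n)|^2\le\int_{t_n}^{\tau_n}|\bfg(\tau)|^2/(\nu\lambda_1)\,d\tau$; monotonicity and the Lipschitz bound $M$ on $\phi$ then make the gap contributions $O(\epsilon)$ by absolute continuity of the integral, and $\epsilon\to0$ finishes. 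You package the same two ingredients (monotonicity of $F=E-\Phi$ coming from \eqref{LH-ineq} at Lebesgue points, and energy equality on the full-measure regular set $\mathcal{G}$) into the structural statement $dE=\Phi'\,d\tau+dF$ with $dF$ a non-positive \emph{singular} measure, and then let the sign conditions $\phi'\ge0$, $dF\le0$ kill the singular contributions in the BV chain rule. Your route is more conceptual and makes visible exactly where the inequality can become strict (the singular set); the paper's route avoids all discussion of representatives and BV machinery, and gets the endpoint values for free because its telescoping sum literally begins at $t_0$ and ends at $t$. One caveat for your route: Vol'pert's chain rule in the form you quote is normally stated for $\phi\in C^1$, while here $\phi$ is only Lipschitz, so the Cantor-part identity $d(\phi\circ E)_c=\phi'(E)\,dF_c$ needs justification (Ambrosio--Dal Maso, or the observation that for an (absolutely continuous)$+$(monotone) function the pushforward of the diffuse part of $dE$ under $E$ is absolutely continuous, so $\phi'$ is defined where needed). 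Since you only need an upper bound, a cheaper fix is to note that $d(\phi\circ E)\bigl((s_1,s_2]\bigr)\le M\int_{s_1}^{s_2}(\Phi')^{+}\,d\tau$ for every interval, so the positive part of $d(\phi\circ E)$ is absolutely continuous and its density can be computed a.e.\ on $\mathcal{G}$, where the ordinary chain rule for absolutely continuous functions applies.

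The ``elementary'' variant in your final sentence, however, is flawed as stated: knowing that $H(t)=\phi(E(t))-\int_{t_0}^{t}\phi'(E)\Phi'\,d\tau$ is constant on each regular interval $I_n$ and has only non-positive jumps across the endpoints of those intervals does \emph{not} imply that $H$ is non-increasing. The singular set $[0,T]\setminus\mathcal{G}$ is in general an uncountable perfect set (it is only known to have fractal dimension at most $1/2$), and a function can increase across such a set in Cantor-staircase fashion while being constant on every complementary interval and having no jumps at all. This Cantor-type increase is precisely what your measure-theoretic argument rules out via the term $\phi'(E)\,dF_c\le0$, whose non-positivity ultimately rests on the monotonicity of $F$ obtained from \eqref{LH-ineq}; any honestly elementary rewriting must control the increment of $H$ between arbitrary $s_1<s_2$ straddling the singular set, which is exactly what the paper's gap estimate \eqref{u-est} accomplishes.
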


\begin{proof}
Let $t_0,t\in[0,T]$, $t\ge t_0$, $t_0$ -  Lebesgue point for $|\bfu(\tau)|^2$.
Let
\[M={\essup}_{\tau\in[t_0,t]}\phi'(|\bfu(\tau)|^2)\]
Choose an $\epsilon>0$.
Then there exists $\delta=\delta(\epsilon)>0$ such that,
for any countable family of disjoint intervals
$\{[t_n,\tau_n]\}\subset[t_0,t]$ such that $\sum_n(\tau_n-t_n)<\delta$,
\begin{equation}\label{e-est1}
M\int\limits_{\cup_n[t_n,\tau_n]}
\frac{|\bfg(\tau)|^2}{\nu\lambda_1}d\tau
<\epsilon.
\end{equation}
and
\begin{equation}\label{e-est2}
\left|\,\int\limits_{\cup_n[t_n,\tau_n]}
\phi'(|\bfu(\tau)|^2)
\left[-\nu||\bfu(\tau)||^2+(\bfg(\tau),\bfu(\tau))\right]\,d\tau\right|
<\epsilon
\end{equation}

Recall that $\bfu$ is regular on a set
\[\mathcal{G}=\cup_n I_n,\]
where int$(I_n)=(\alpha_n,\beta_n)$ and
$\sum_n(\beta_n-\alpha_n)=t-t_0$.
Clearly, inside each of the intervals $(\alpha_n,\beta_n)$ we have
\[
\phi(|\bfu(s_1)|^2)-\phi(|\bfu(s_0)|^2)=
2\int_{s_0}^{s_1}\phi'(|\bfu(\tau)|^2)
\left[-\nu||\bfu(\tau)||^2+(\bfg(\tau),\bfu(\tau))\right]\,d\tau
\]
for all $s_0,s_1\in(\alpha_n,\beta_n)$, $s_0\le s_1$.

Note that there exists $N=N(\delta)$ such that
\[\sum\limits_{n=1}^{N}(\beta_n-\alpha_n)\ge t-t_0-\delta/2.\]
Re-arrange these first $N$ intervals such that $\beta_n\le\alpha_{n+1}$
for all $n=1..N-1$. Choose $\tau_{n-1},t_n\in(\alpha_n,\beta_n)$,
$n=1..N$ and $\tau_N=t$, so that $\tau_{n-1}<t_n<\tau_n$ for $n=1..N$
and
\[\sum\limits_{n=0}^{N}(\tau_n-t_n)<\delta.\]

Then, by the \eqref{LH-ineq},
\begin{equation*}
|\bfu(\tau_n)|^2-|\bfu(t_n)|^2\le 2\int_{t_n}^{\tau_n}
\left[-\nu||\bfu(\tau)||^2+(\bfg(\tau),\bfu(\tau))\right]\,d\tau,
\end{equation*}
for all $n=0..N$.
Then,
\begin{equation}\label{u-est}
|\bfu(\tau_n)|^2-
|\bfu(t_n)|^2\le|\bfu(\tau_n)|^2-
|\bfu(t_n)|^2+\nu \int_{t_n}^{\tau_n}
||\bfu(\tau)||^2d\tau\le
\int_{t_n}^{\tau_n}\frac{|\bfg(\tau)|^2}{\nu\lambda_1}\,d\tau,
\end{equation}
for all $n=0..N$.
On the other hand, since $\phi$ is increasing,
\begin{equation*}
\phi(|\bfu(\tau_n)|^2)-\phi(|\bfu(t_n)|^2)\le
\left\{
\begin{aligned}
&0,\ \mbox{if}\ |\bfu(\tau_n)|<|\bfu(t_n)|;\\
&M(|\bfu(\tau_n)|^2-|\bfu(t_n)|^2),\ \mbox{otherwise}.
\end{aligned}
\right.
\end{equation*}
Use \eqref{u-est} to obtain
\begin{align*}
\phi(|\bfu(\tau_n)|^2)-\phi(|\bfu(t_n)|^2)
\le
M\int_{t_n}^{\tau_n}
\frac{|\bfg(\tau)|^2}{\nu\lambda_1}d\tau.
\end{align*}
Thus, taking into the account \eqref{e-est1} and \eqref{e-est2},
we can estimate
\begin{align}\label{phi-est}
&\phi(|\bfu(t)|^2)-\phi(|\bfu(t_0)|^2)\\
&=
\sum\limits_{n=0}^{N}
\left(\phi(|\bfu(\tau_n)|^2)-\phi(|\bfu(t_n)|^2)\right)
+\sum\limits_{n=1}^{N}
\left(\phi(|\bfu(t_n)|^2)-\phi(|\bfu(\tau_{n-1})|^2)\right)
\nonumber\\
&\le M\int\limits_{\cup_n[t_n,\tau_n]}
\frac{|\bfg(\tau)|^2}{\nu\lambda_1}d\tau
+
2\sum\limits_{n=1}^{N}\int_{\tau_{n-1}}^{t_n}
\Psi(\tau)d\tau
\nonumber\\
&\le
\epsilon+\sum\limits_{n=1}^{N}\int_{\tau_{n-1}}^{t_n}
\Psi(\tau)d\tau
\le
2\epsilon+2\int_{t_0}^{t}
\Psi(\tau)d\tau\nonumber
\end{align}
where
\[\Psi(\tau)=\phi'(|\bfu(\tau)|^2)
\left[-\nu||\bfu(\tau)||^2+(\bfg(\tau),\bfu(\tau))\right].\]
%Note that
%\begin{align*}
%&2\sum\limits_{n=1}^{N}\int_{\tau_{n-1}}^{t_n}
%\Psi(\tau)d\tau\le
%2\int_{t_0}^{t}\Psi(\tau)d\tau
%-2\int\limits_{\cup_n[t_n,\tau_n]}
%\Psi(\tau)d\tau
%\\
%&\le
%2\int_{t_0}^{t}\Psi(\tau)d\tau
%+\int\limits_{\cup_n[t_n,\tau_n]}
%M\left[-\nu||\bfu(\tau)||^2+\frac{|\bfg(\tau)|^2}{\nu\lambda_1}\right]\,d\tau
%\\
%&\ge
%2\int_{t_0}^{t}\Psi(\tau)d\tau
%+M\int\limits_{\cup_n[t_n,\tau_n]}
%\frac{|\bfg(\tau)|^2}{\nu\lambda_1}d\tau
%\\
%&\ge
%2\int_{t_0}^{t}\Psi(\tau)d\tau
%+\epsilon.
%\end{align*}
Returning to \eqref{phi-est}, make $\epsilon\to0$ to obtain \eqref{gen-LHI}.
%Make $\epsilon\to0$ in \eqref{phi-est} to obtain \eqref{gen-LHI}.

\end{proof}

\begin{cor}
Under the assumptions of Theorem \ref{thm-main}, let
$\psi\in C^1(\mathbb{R}_{+},\mathbb{R})$
be such that $\psi'(\xi)\le 0$ for all $\xi\ge0$.
%any nondecreasing
%differentiable function with a locally bounded derivative.
Then the following inequality holds
\begin{equation}\label{gen-LHI-negative}
\psi(|\bfu(t)|^2)-\psi(|\bfu(t_0)|^2)\ge
2\int_{t_0}^{t}\psi'(|\bfu(\tau)|^2)
\left[-\nu||\bfu(\tau)||^2+(\bfg(\tau),\bfu(\tau))\right]\,d\tau
\end{equation}
for all $t_0\in[0,T]$ - Lebesgue point for $|\bfu(\tau)|^2$ and all
$t\in[0,T]$, $t\ge t_0$.
\end{cor}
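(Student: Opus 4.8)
The plan is to reduce the corollary to Theorem \ref{thm-main} by a sign reflection. Set $\phi=-\psi$. Since $\psi'(\xi)\le 0$ for all $\xi\ge 0$, we have $\phi'(\xi)=-\psi'(\xi)\ge 0$, so $\phi$ is non-decreasing; and since $\psi\in C^1$, $\phi$ is absolutely continuous. If $\phi$ satisfied all the hypotheses of Theorem \ref{thm-main}, then applying that theorem to $\phi$ would give
\[
\phi(|\bfu(t)|^2)-\phi(|\bfu(t_0)|^2)\le
2\int_{t_0}^{t}\phi'(|\bfu(\tau)|^2)
\left[-\nu||\bfu(\tau)||^2+(\bfg(\tau),\bfu(\tau))\right]\,d\tau,
\]
and multiplying this inequality by $-1$ (which reverses its direction) would yield precisely \eqref{gen-LHI-negative}. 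So the entire content of the corollary is this one reflection, once the hypotheses of Theorem \ref{thm-main} are verified for $\phi$.

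The one hypothesis that is not automatic is $\phi'\in L^{\infty}(\mathbb{R}_{+},\mathbb{R}_{+})$: the corollary only assumes $\psi\in C^1$, so $\psi'$ (hence $\phi'$) is continuous but need not be bounded on all of $\mathbb{R}_{+}$. The key observation that removes this obstacle is that $\psi'$ is only ever evaluated at the points $|\bfu(\tau)|^2$, $\tau\in[0,T]$, and these lie in a fixed bounded set. Indeed, starting the energy inequality \eqref{LH-ineq} at $t_0=0$ (a Lebesgue point, by the remark following Theorem \ref{LH-soln}) and bounding the forcing term by $2(\bfg,\bfu)\le |\bfg|^2/(\nu\lambda_1)+\nu\lambda_1|\bfu|^2$ together with $\lambda_1|\bfu|^2\le||\bfu||^2$, the viscous terms absorb exactly as in \eqref{u-est}, and one obtains
\[
\sup_{\tau\in[0,T]}|\bfu(\tau)|^2\le |\bfu(0)|^2+\frac{1}{\nu\lambda_1}\int_0^T|\bfg(\tau)|^2\,d\tau=:R<\infty,
\]
the right-hand side being finite because $\bfg\in L^2([0,T],H)$.

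With $R$ in hand, I would replace $\phi$ by a modification $\tilde\phi$ that agrees with $-\psi$ on $[0,R]$ but is extended to $\mathbb{R}_{+}$ with bounded slope, e.g.\ continued linearly with slope $\phi'(R)$ for arguments exceeding $R$. Then $\tilde\phi$ is absolutely continuous, non-decreasing, and $\tilde\phi'\in L^{\infty}(\mathbb{R}_{+},\mathbb{R}_{+})$, so Theorem \ref{thm-main} applies to $\tilde\phi$. Since $|\bfu(\tau)|^2\in[0,R]$ for every relevant $\tau$, we have $\tilde\phi(|\bfu(\tau)|^2)=-\psi(|\bfu(\tau)|^2)$ and $\tilde\phi'(|\bfu(\tau)|^2)=-\psi'(|\bfu(\tau)|^2)$ throughout, so the conclusion of Theorem \ref{thm-main} for $\tilde\phi$ reads $-\psi(|\bfu(t)|^2)+\psi(|\bfu(t_0)|^2)\le -2\int_{t_0}^{t}\psi'(|\bfu(\tau)|^2)[-\nu||\bfu(\tau)||^2+(\bfg(\tau),\bfu(\tau))]\,d\tau$. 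Multiplying by $-1$ gives \eqref{gen-LHI-negative}.

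The main obstacle is precisely the global boundedness of $\phi'$ just addressed; everything else is the routine sign reflection. No genuinely new estimate is required beyond the a priori bound on $|\bfu|^2$, which is itself immediate from the energy inequality already available.
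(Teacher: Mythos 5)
Your proposal is correct, and at its core it is the same reduction the paper uses: the paper's entire proof is the single line ``apply Theorem \ref{thm-main} to $\phi=-\psi$.'' What you add --- and what the paper silently skips --- is the verification that $\phi=-\psi$ actually satisfies the hypotheses of Theorem \ref{thm-main}. As you observe, it need not: the corollary assumes only $\psi\in C^1(\mathbb{R}_{+},\mathbb{R})$ with $\psi'\le 0$, so $\phi'=-\psi'$ is continuous and nonnegative but possibly unbounded (e.g.\ $\psi(\xi)=-\xi^2$), whereas the theorem demands $\phi'\in L^{\infty}(\mathbb{R}_{+},\mathbb{R}_{+})$. Your repair is the natural one and is sound: the a priori bound $\sup_{\tau\in[0,T]}|\bfu(\tau)|^2\le |\bfu(0)|^2+\frac{1}{\nu\lambda_1}\int_0^T|\bfg(\tau)|^2\,d\tau$ follows from \eqref{LH-ineq} taken at $t_0=0$ (a Lebesgue point by the remark after Theorem \ref{LH-soln}) via exactly the absorption used in \eqref{u-est}, and it holds for \emph{every} $t\in[0,T]$ because \eqref{LH-ineq} is stated for all such $t$; modifying $\phi$ beyond $R$ by linear continuation with slope $\phi'(R)$ preserves absolute continuity and monotonicity, yields a bounded derivative (its supremum being $\max_{[0,R]}\phi'$, finite by continuity on a compact interval), and changes none of the quantities in \eqref{gen-LHI-negative}, since the arguments $|\bfu(\tau)|^2$ never leave $[0,R]$. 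In short, your proof is strictly more complete than the paper's: the one-line reduction is literally valid only when $\psi'$ is bounded, while your truncation argument establishes the corollary as actually stated.
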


\begin{proof}
This result follows by applying Theorem \ref{thm-main} to $\phi=-\psi$.
\end{proof}

\section*{acknowledgements} The author would like to thank Professor Ciprian Foias for helpful suggestions and comments.

%\nocite{MR972259}
%\nocite{MR1855030}
%\nocite{MR1325465}
%\nocite{MR1846644}
%\nocite{MR1318914}

\bibliographystyle{plain}
\bibliography{3Dnse}

\end{document}